    \def\l@subsection{\@tocline{2}{0pt}{2.5pc}{5pc}{}}
    \newcommand{\val}{\mathfrak{v}}
    \newcommand{\cS}{\mathcal{S}}
    \newcommand{\Aff}{\mathbb{A}}
    \newcommand{\ZZ}{\mathbb{Z}}
    \newcommand{\KK}{\mathbb{K}}
    \newcommand{\RR}{\mathbb{R}}
    \newcommand{\PP}{\mathbb{P}}
    \newcommand{\NN}{\mathbb{N}}
    \newcommand{\Gm}{\mathbb{G}_m}
    \DeclareMathOperator{\PGL}{PGL}
    \DeclareMathOperator{\GL}{GL}
    \DeclareMathOperator{\In}{In}
    \newtheorem{thm}[equation]{Theorem}
    \newtheorem{lemma}[equation]{Lemma}
    \theoremstyle{definition}
    \newtheorem{defn}[equation]{Definition}
    \newcommand{\hide}[1]{}
    \numberwithin{equation}{section}
    \title{Toric Degenerations of Low Degree Hypersurfaces}
    \author{Nathan Ilten}
    \address{Department of Mathematics, Simon Fraser University,
    8888 University Drive, Burnaby BC V5A1S6, Canada}
    \email{\href{mailto:nilten@sfu.ca}{nilten@sfu.ca}}
    \author{Oscar Lautsch}
    \address{Department of Mathematics, Simon Fraser University,
    8888 University Drive, Burnaby BC V5A1S6, Canada}
    \email{\href{mailto:oscar\_lautsch@sfu.ca}{oscar\_lautsch@sfu.ca}}
\begin{document}
   \maketitle
   \begin{abstract}
We show that a sufficiently general hypersurface of degree $d$ in $\PP^n$ admits a toric Gr\"obner degeneration after linear change of coordinates if and only if $d\leq 2n-1$.
   \end{abstract}

    \section{Introduction}
When does a projective variety $X$ admit a flat degeneration to a toric variety? 
Among other applications, such degenerations are used in the mirror-theoretic approach to the classification of Fano varieties \cite{fanosearch}, the construction of integrable systems \cite{integrable}, and in bounding Seshadri constants \cite{seshadri}.
The many applications of toric degenerations notwithstanding, there is as of yet no general method for determining if a given variety admits a toric degeneration.

In this note, we will consider the special case of toric degenerations of some $X\subset \PP^n$ obtained as the flat limit of $X$ under a $\Gm$-action on $\PP^n$. In the case that the $\Gm$-action arises as a one-parameter subgroup of the standard torus on $\PP^n$, the situation may be well understood by studying the Gr\"obner fan and tropicalization of $X$ \cite{tropical}. 
However, if we consider arbitrary $\Gm$-actions on $\PP^n$, the situation becomes more complicated. As a test case, we investigate the existence of such toric degenerations when $X$ is a hypersurface.

In order to state our result, we introduce some notation.
Throughout the paper, $\KK$ will be an algebraically closed field of characteristic zero.
Let $\omega\in\RR^{n+1}$. Consider any polynomial $f\in\KK[x_0,\ldots,x_n]$, 
where we write 
\begin{equation}\label{eqn:f}
	f=\sum_{u\in \ZZ_{\geq 0}^{n+1}}  c_ux^u
\end{equation}
using multi-index notation.
The \emph{initial term} of $f$ with respect to the weight vector $\omega$ is 
\[
\In_\omega(f)=\sum_{u: \langle u,\omega \rangle =\lambda} c_ux^u
\]
where $\lambda$ is the maximum of $\langle u,\omega \rangle$ as $u$ ranges over all $u\in \ZZ_{\geq 0}$ with $c_u\neq 0$.
For an ideal $J\subset \KK[x_0,\ldots,x_n]$, its initial ideal with respect to the weight vector $\omega$ is 
\[
\In_\omega(J)=\langle \In_\omega(f)\ |\ f\in J\rangle.
\]
The \emph{weight} of a monomial $x^u$ with respect to $\omega$ is the scalar product $\langle u,\omega \rangle\in \RR$.
\begin{defn}
	Let $X\subset \PP^n$ be a projective variety over $\KK$. We say that \emph{$X$ admits a toric Gr\"obner degeneration up to change of coordinates} if there exists a $\PGL(n+1)$ translate $X'$ of $X$ and a weight vector $\omega\in\RR^{n+1}$ such that the initial ideal
	\[
\In_\omega (I(X'))
	\]
	of the ideal $I(X')\subseteq \KK[x_0,\ldots,x_n]$ of $X'$ is a prime binomial ideal.
\end{defn}

We can now state our result:
\begin{thm}\label{thm:main}
	Let $d,n\in \NN$.
There is a non-empty Zariski open subset $U$ of the linear system of degree $d$ hypersurfaces in $\PP^n$ with the property that every hypersurface in $U$ admits a toric Gr\"obner degeneration up to change of coordinates if and only if $d\leq 2n-1$.
\end{thm}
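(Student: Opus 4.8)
The plan is to reduce the statement to a dominance question for a single explicit morphism, and then to decide dominance by a tangent-space computation. Since a sufficiently general hypersurface $V(f)$ is irreducible, its ideal is the principal ideal $\langle f\rangle$, and for every weight $\omega$ one has $\In_\omega\langle f\rangle=\langle\In_\omega f\rangle$ because initial forms are multiplicative, $\In_\omega(gh)=\In_\omega(g)\,\In_\omega(h)$. A principal ideal is a prime binomial ideal exactly when its generator is an irreducible binomial (with a single variable allowed when $d=1$). Thus $V(f)$ admits a toric Gr\"obner degeneration up to change of coordinates if and only if there are $A\in\PGL(n+1)$ and $\omega\in\RR^{n+1}$ with $\In_\omega(f\circ A)$ an irreducible binomial. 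Writing $V=\KK[x_0,\dots,x_n]_d$ and letting $\cS\subset V$ be the locally closed set of forms whose $\omega$-initial form is an irreducible binomial for some $\omega$, the good hypersurfaces are precisely the image of $\mu\colon\PGL(n+1)\times\cS\to\PP(V)$, $(A,[g])\mapsto[g\circ A]$. The required open set $U$ exists if and only if $\mu$ is dominant, so the theorem becomes the single assertion that $\mu$ is dominant precisely when $d\le 2n-1$.

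The next step is a differential criterion for dominance. Fix a combinatorial datum: two monomials $x^a,x^b$ with $B=c_ax^a+c_bx^b$ an irreducible binomial, and a weight $\omega$ for which $a,b$ attain the maximal weight $\lambda$ on $\mathrm{supp}(B)$. A general point of the corresponding chart of $\cS$ is $g_0=B+h$ with $h$ a general form supported on monomials of weight strictly below $\lambda$. Since the lower-weight directions $V_{<\lambda}$ already lie in the image of $d\mu$ (they are tangent to the $h$-perturbations), dominance is equivalent, after passing to $W:=V/V_{<\lambda}$ (the span of monomials of weight $\ge\lambda$), to the statement that the classes of $x^a$, $x^b$ and of the forms $x_i\partial_j g_0$ (for $0\le i,j\le n$) span $W$. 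In characteristic zero it then suffices, for the ``if'' direction, to exhibit one datum $(a,b,\omega)$ for which this spanning holds, and conversely non-dominance will follow from producing, for every datum, a nonzero functional annihilating all these classes.

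For sufficiency, when $d\le 2n-1$ I would choose $a,b$ with disjoint supports together exhausting all $n+1$ variables and a weight $\omega$ minimizing the set of monomials of weight $\ge\lambda$, so that $W$ is as ``thin'' as possible; the spanning is then a finite rank computation. The mechanism is that each $x_i\partial_j$ moves one unit of exponent and shifts weight by $\omega_i-\omega_j$: monomials of $W$ lying just above level $\lambda$ are reached from below through $x_i\partial_j h$ for general $h$, while the finitely many ``deep'' (highest-weight) monomials must be reached as single-derivative neighbors $x_i\partial_j x^a$, $x_i\partial_j x^b$ of the two monomials of $B$. Spreading $a$ and $b$ over all variables makes these derivative neighborhoods large enough to hit every deep monomial, and a direct check of the resulting incidence matrix shows it has full rank exactly up to $d=2n-1$. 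The analogous computation already fails at $d=2n$ (for instance for $n=2$, $d=4$ with $a=x_0^4,\ b=x_1^3x_2$ the monomial $x_0^2x_1^2$ and its weight-higher partners are not separated, leaving a one-dimensional cokernel), which is the phenomenon driving the bound.

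The main obstacle is the ``only if'' direction: I must show that for $d\ge 2n$ \emph{no} choice of $(a,b,\omega)$ makes $d\mu$ surjective, i.e.\ I need a uniform construction of a nonzero $\phi\in W^{*}$ with $\phi(x^a)=\phi(x^b)=0$ and $\phi(x_i\partial_j g_0)=0$ for all $i,j$. The difficulty is that set-theoretic reachability of each monomial of $W$ is necessary but not sufficient: the derivative images arrive only in fixed linear combinations (each operator acts on both terms of $B$ and on all of $h$ simultaneously), so the honest content is a rank deficiency of the associated incidence/catalecticant-type matrix rather than a covering statement. I would organize this by grading $W$ by $\omega$-weight, analyzing the top graded pieces first---where only the few derivatives $x_i\partial_j B$ with $\omega_i-\omega_j$ maximal can contribute---and propagating a nonzero cokernel vector downward; the counting that the number of such contributing derivatives falls short of the number of top monomials once $d\ge 2n$ is exactly where the threshold $2n-1$ must be pinned down, and is the crux of the argument.
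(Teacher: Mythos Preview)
Your overall framework---reducing to dominance of the change-of-coordinates map and testing dominance by a tangent-space computation---is exactly right and matches the paper. However, neither direction is actually carried out, and in the ``only if'' direction you are missing the structural insight that makes the argument go through. You propose to treat each irreducible binomial $x^a+x^b$ and each weight $\omega$ separately, building a nonzero cokernel functional by a weight-graded analysis of the ``top'' monomials. You flag this yourself as ``the crux of the argument'' and leave it as a counting heuristic; as stated it is not a proof, and making it uniform over all $(a,b,\omega)$ would be delicate. The paper avoids this entirely: it fixes one explicit linear subfamily $W$ (degree-$d$ forms in which the only monomial in $x_0,x_1$ alone is $x_1^d$), proves a single key lemma that the differential of $\GL(n{+}1)\times W\to\KK[x_0,\dots,x_n]_d$ at a general point is surjective iff $d\le 2n-1$ (via one explicit $(2n{-}2)\times(d{-}1)$ coefficient matrix), and then shows that \emph{every} irreducible binomial together with a variable ordering, after a suitable permutation of coordinates, forces the ``good'' set $S$ to sit inside $W$. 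Thus both directions reduce to the same rank computation. The reduction step---that for any prime binomial $g=g'+g''$ and compatible $\omega$ the monomials $x_0^d,\dots,x_0x_1^{d-1}$ are automatically excluded---is the idea your proposal lacks.

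On the ``if'' side your sketch is also incomplete: you suggest spreading $a,b$ over all $n+1$ variables and minimizing the high-weight set, then assert ``a direct check'' gives full rank up to $d=2n-1$, but no check is done and the choice is not pinned down. The paper instead uses the much simpler binomial $x_1^d+x_0^{d-1}x_2$ (only three variables) with a weight satisfying $(d-1)\omega_0+\omega_2=d\omega_1$; the resulting matrix is small and its rank is immediate. Your example for $n=2$, $d=4$ illustrates the failure mechanism but does not substitute for the general non-dominance argument.
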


\noindent Before proving this theorem in the following section, we discuss connections to the existing literature.

	A common source of toric degenerations of a projective variety $X\subset \PP^n$ arises by considering the Rees algebra associated to a full-rank homogeneous valuation $\val$ on the homogeneous coordinate ring of $X$ \cite{anderson}. As long as the homogeneous coordinate ring of $X$ contains a finite set $\cS$ whose valuations generate the value semi-group of $\val$, one obtains a toric degeneration. Such a set $\cS$ is called a \emph{finite Khovanskii basis} for the coordinate ring of $X$. This construction is in fact quite general: essentially any $\Gm$-equivariant degeneration of $X$ over $\Aff^1$ arises by this construction, see \cite[Theorem 1.11]{kmm} for a precise statement.
There has been some work  on algorithmically constructing valuations with finite Khovanskii bases (see e.g.~\cite{flag} for applications to degenerations of certain flag varieties) but as of yet, there is no general effective criterion for deciding when such a valuation exists.

	Drawing on \cite{km} which connects Khovanskii bases and tropical geometry, we may rephrase our results in the language of Khovanskii bases. It is straightforward to show that $X$ admits a toric Gr\"obner degeneration up to change of coordinates if and only if there is some full-rank homogeneous valuation $\val$ for which the homogeneous coordinate ring has a finite Khovanskii basis consisting of degree one elements. Thus, our theorem shows the existence of finite Khovanskii bases for general hypersurfaces of degree at most $2n-1$, and shows that any finite Khovanskii basis for a general hypersurface of larger degree necessarily contains elements of degrees larger than one. In fact, we suspect that a general hypersurface of sufficiently large degree does not admit any finite Khovanskii basis at all.

We note in passing that a general hypersurface of arbitrary degree will admit a toric degeneration in a weaker sense. Indeed, the universal hypersurface over the linear system of degree $d$ hypersurfaces is a flat family, and for any degree $d$ there is a toric hypersurface of degree $d$. However, such a degeneration is not $\Gm$-equivariant.

An interesting comparison of our result can be made with \cite{kaveh2021generic}, which states that after a \emph{generic} change of coordinates, any arithmetically Cohen Macaulay variety $X\subset \PP^n$ has a Gr\"obner degeneration to a (potentially non-normal) variety equipped with an effective action of a codimension-one torus. Such varieties, called complexity-one $T$-varieties, are in a sense one step away from being toric. The hypersurfaces we consider in our main result (Theorem \ref{thm:main}) are of course arithematically Cohen Macaulay, so they admit Gr\"obner degenerations to complexity-one $T$-varieties.
Our result characterizes when we can go one step further and Gr\"obner degenerate to something toric. When $d\leq 2n-1$ and we are in the range for which this is possible for a generic hypersurface, the change of coordinates required is a special one as opposed to the generic change of coordinates of \cite{kaveh2021generic}.

	\subsection*{Acknowledgements}
	N.~Ilten was supported by an NSERC Discovery Grant. O.~Lautsch was supported by an NSERC undergraduate student research award.

    \section{Proof of Theorem}
    \subsection*{Setup}
    Throughout, we will assume that $d,n>1$ since the theorem is clearly true if $d=1$ or $n=1$.
    We will view the coefficients $c_u$ of $f$ in \eqref{eqn:f} as coordinates on affine space $\Aff^{d+n \choose n}$. To indicate the dependence of $f$ on the choice of coefficients $c$, we will often write $f=f_c$.
    Let $K$ be the subset of all $u\in\ZZ_{\geq 0}^{n+1}$ such that $u_0+u_1=d$, $u_i=0$ for $i>1$, and $u_1<d$. We then set
    \[
	    W=V(\langle c_u \rangle_{u\in K})\subset \Aff^{d+n \choose n}.
    \]
The family of polynomials parameterized by $W$ consists of all degree $d$ forms such that the only monomial involving only $x_0$ and $x_1$ is $x_1^d$.

We will be considering the map
\begin{align*}
	\phi:\GL(n+1)\times W&\to \KK[x_0,\ldots,x_n]_d\\
		(A,c)&\mapsto A.f_c
\end{align*}
where $A.f_c$ denotes the action of $A\in \GL(n+1)$ on a polynomial $f_c=\sum c_ux^u$ via linear change of coordinates. We will be especially interested in the differential of $\phi$ at $(e,c)$, where $e\in\GL(n+1)$ is the identity. A straightforward computation shows that the image of the differential at $(e,c)$ is generated by 
\begin{align}
	&x^u\qquad &u\notin K;\label{eqn:d1}\\
	&\frac{\partial f_c}{\partial x_i} \cdot x_j \qquad &0\leq i,j \leq n.\label{eqn:d2}
\end{align}

The following lemma is the key to our proof:
\begin{lemma}\label{lemma:key}
The differential $\phi$ is surjective at $(e,c)$ for general $c\in W$ if and only if $d\leq 2n-1$.
\end{lemma}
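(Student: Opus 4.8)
The plan is to quotient out the monomials already accounted for and reduce everything to binary forms. Because the generators in \eqref{eqn:d1} contribute every monomial $x^u$ with $u\notin K$, the image of the differential surjects onto $\KK[x_0,\ldots,x_n]_d$ if and only if the images of the generators \eqref{eqn:d2} span the quotient $Q:=\KK[x_0,\ldots,x_n]_d/\langle x^u : u\notin K\rangle$. Since the monomials $x^u$ with $u\in K$ are precisely the degree-$d$ binary monomials other than $x_1^d$, this quotient is identified with $\KK[x_0,x_1]_d/\langle x_1^d\rangle$, a space of dimension $d$ with basis given by the classes $m_j:=x_0^{d-j}x_1^j$ for $0\le j\le d-1$. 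Computing the image of a generator in $Q$ thus amounts to keeping only its pure $(x_0,x_1)$-monomials and then deleting the $x_1^d$ term.

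Next I would determine which generators survive. The product $\tfrac{\partial f_c}{\partial x_i}\cdot x_j$ dies in $Q$ as soon as $j>1$, since every term is then divisible by $x_j$; so only $j\in\{0,1\}$ contribute, and only through the pure $(x_0,x_1)$-part $h_i$ of $\partial f_c/\partial x_i$. A direct calculation using the defining equations of $W$ gives $h_0=0$, then $h_1=d\,c_{x_1^d}\,x_1^{d-1}$, and, for $i\ge 2$, a binary form $p_i:=h_i$ of degree $d-1$ whose $d$ coefficients are distinct free coordinates on $W$. Hence the image of \eqref{eqn:d2} in $Q$ is spanned by the $2n-1$ vectors $c_{x_1^d}\,m_{d-1}$ together with $p_ix_0$ and $p_ix_1\bmod x_1^d$ for $i=2,\ldots,n$, the remaining products $h_0x_j$ and $h_1x_1$ contributing nothing.

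The forward implication is then immediate: for every $c$ the image in $Q$ is spanned by at most $2n-1$ vectors, so if $d=\dim Q>2n-1$ the differential cannot be surjective. For the converse I would use that spanning is a Zariski-open condition on $c$, so it suffices to produce a single point of $W$ at which the $2n-1$ vectors span. Setting $c_{x_1^d}=1$ and letting the $p_i$ range over the monomials $x_0^{d-1-2t}x_1^{2t}$ for $t=0,\ldots,\lceil(d-1)/2\rceil-1$, the generators $p_ix_0$ and $p_ix_1$ realize the consecutive classes $m_{2t}$ and $m_{2t+1}$; as $\lceil(d-1)/2\rceil\le n-1$ precisely when $d\le 2n-1$, there are enough indices $i$ for these pairs, which together with $c_{x_1^d}m_{d-1}$ exhaust the basis $m_0,\ldots,m_{d-1}$. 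The vectors therefore span $Q$ at this point, hence for general $c\in W$.

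I expect the main obstacle to be the second step: restricting to $W$ and correctly identifying $h_0$, $h_1$, and the general binary forms $p_i$, since the entire dichotomy rests on the count of surviving generators being exactly $2n-1$.
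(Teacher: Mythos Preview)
Your argument is correct and follows essentially the same route as the paper: both reduce modulo the span of \eqref{eqn:d1} to the $d$-dimensional space of binary monomials $x_0^{d-j}x_1^{j}$ with $j<d$, identify the surviving contributions from \eqref{eqn:d2} as $x_0x_1^{d-1}$ (from $i=1$, $j=0$) together with $x_0p_i$ and $x_1p_i$ for $i\ge 2$, and then compare the count $2n-1$ against $d$. The only cosmetic difference is that the paper first absorbs $x_0x_1^{d-1}$ and records the remaining data as a $(2n-2)\times(d-1)$ coefficient matrix whose generic full rank is asserted directly, whereas you keep all $2n-1$ generators and certify spanning by an explicit specialization of the $p_i$; both yield the same dichotomy $d\le 2n-1$.
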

\begin{proof}
Consider the image of the differential of $\phi$ at $(e,c)$.
From \eqref{eqn:d1} we obtain the span of all monomials of $\KK[x_0,\ldots,x_n]_d$ with the exceptions of the $d$ monomials $x_0^d,x_0^{d-1}x_1,\ldots,x_0x_1^{d-1}$. 
From \eqref{eqn:d2} with $i=1$ and $j=0$, modulo \eqref{eqn:d1} we additionally obtain the monomial $x_0x_1^{d-1}$.
We do not obtain anything new from \eqref{eqn:d2} when $i=1$ and $j=1$, when $i=0$, or when $j>1$.

It remains to consider the contributions to the image from \eqref{eqn:d2} with $i>1$ and $j=0,1$.
For $2\leq i \leq n$ and $1\leq m\leq d-1$, let $u(i,m)\in \ZZ^{n+1}$ be the exponent vector with $u_i=1$, $u_0=m$, $u_1=d-m-1$.
Modulo the span of \eqref{eqn:d1} and $x_0x_1^{d-1}$, from \eqref{eqn:d2} we obtain
\begin{align*}
	\frac{\partial f_c}{\partial x_i}\cdot x_0\equiv	&c_{u(i,d-1)}x_0^d+c_{u(i,d-2)}x_0^{d-1}x_1+\ldots+c_{u(i,1)}x_0^2x_1^{d-2}\\
	\frac{\partial f_c}{\partial x_i}\cdot x_1\equiv	&\phantom{c_{u(i,d-1)}x_0^d+}c_{u(i,d-1)}x_0^{d-1}x_1+\ldots+c_{u(i,2)}x_0^2x_1^{d-2}.
\end{align*}
Varying $i$ from $2$ to $n$, we obtain $2n-2$ polynomials of degree $d$. 
The $(2n-2)\times (d-1)$ matrix of their coefficients has the form
\[
	\left(\begin{array}{c c c  c c}
c_{u(2,d-1)}&c_{u(2,d-2)}&\ldots&c_{u(2,1)}\\
0&c_{u(2,d-1)}&\ldots&c_{u(2,2)}\\
c_{u(3,d-1)}&c_{u(3,d-2)}&\ldots&c_{u(3,1)}\\
0&c_{u(3,d-1)}&\ldots&c_{u(3,2)}\\
\vdots&\vdots&&\vdots\\
c_{u(n,d-1)}&c_{u(n,d-2)}&\ldots&c_{u(n,1)}\\
0&c_{u(n,d-1)}&\ldots&c_{u(n,2)}
	\end{array}
		\right)
\]

Since $c\in W$ is general, this matrix has full rank, that is, 
its rank is $\min\{d-1,2n-2\}$. 
Hence, the image of the differential of $\phi$ has codimension
\[
d-1-\min\{d-1,2n-2\},
\]
so the differential is surjective if and only if $d\leq 2n-1$.
\end{proof}

We now move on to prove the theorem.

\subsection*{Existence}
We will first show that if $d\leq 2n-1$, a general degree $d$ hypersurface admits a toric Gr\"obner degeneration up to change of coordinates. 
 As noted above,
the family of polynomials parameterized by $W$ consists of all degree $d$ forms such that the only monomial involving only $x_0$ and $x_1$ is $x_1^d$. Consider any $\omega\in\RR^{n+1}$ such that 
\begin{equation*}
\omega_0>\omega_1>\omega_2>\cdots>\omega_n\qquad (d-1)\omega_0+\omega_2=d\omega_1.
\end{equation*}
For general $c\in W$, the initial term of $f_{c}$ is
\[
	ax_1^d+bx_0^{d-1}x_2
\]
for some $a,b\neq 0$;
this is a prime binomial.
Thus, we will be done with our first claim if we can show that the image of $\phi$ contains a non-empty open subset of $\KK[x_0,\ldots,x_n]_d$.

To this end, we consider the image of the differential at $(e,c)$ for general $c\in W$.
By Lemma \ref{lemma:key}, we conclude that $\phi$ has surjective differential at $(e,c)$ for general $c\in W$; it follows that $\phi$ has surjective differential at a general point of $\GL(n+1)\times W$. Thus, the dimension of the image of $\phi$ is the dimension of 
$\KK[x_0,\ldots,x_n]_d$, and the image of $\phi$ contains a non-empty open subset of $\KK[x_0,\ldots,x_n]_d$.
\subsection*{Non-existence}
Assume now that $d>2n-1$.
We first give an overview of the proof strategy. There are only finitely many prime binomials $g$ of degree $d$. Likewise, there are only finitely many linear orderings $\prec$ of the variable indices $0,\ldots,n$.
We say that a weight vector $\omega$ is \emph{compatible} with $\prec$ and $g$ if whenever $i\prec j$ in the linear ordering, then $\omega_i\geq \omega_j$, and the two monomials of $g$ have the same weight with respect to $\omega$.

For fixed $g$ and linear ordering on the variables, we may consider the set $S$ of all polynomials $f$ in $\KK[x_0,\ldots,x_n]_d$ for which there exists a compatible weight vector $\omega\in\RR^{n+1}$ such that initial term of $f$ with respect to $\omega$ is $g$.
We will show that up to permutation of the coordinates, this set $S$ can be identified as a subfamily of $W$. By Lemma \ref{lemma:key}, the map $\phi$ has nowhere surjective differential. Thus, by generic smoothness, the dimension of the image of $\phi$ must be strictly less than the dimension of $\KK[x_0,\ldots,x_n]_d$. It follows that there cannot be a Zariski-open subset of $\KK[x_0,\ldots,x_n]_d$ such that every hypersurface in this subset admits a toric Gr\"obner degeneration up to change of coordinates.

To complete the proof, we will fix a prime binomial $g=g'+g''$ of degree $d$ and a linear ordering of the variables.
Here, $g'$ and $g''$ are the two terms of $g$.
After permuting the variables and appropriately adapting $g$, we may assume without loss of generality that the indices are ordered as $0\prec 1 \prec  2 \prec \ldots  \prec n$. 
The irreducibility of $g$ implies that $g$ involves at least three distinct variables, and no variable appears in both $g'$ and $g''$.
Let $p$ be the smallest index such that $x_p$ appears in $g$; we denote the corresponding term by $g'$. Let $q$ be the smallest index such that $x_q$ appears in the term $g''$.

If $g'$ only involves variables $x_i$ with indices $i<q$, then any compatible term order $\omega$ must satisfy $\omega_p=\omega_q=\omega_j$ for all $p\leq j \leq q$. Indeed, if not, the term $g''$ would necessarily have smaller weight  Without loss of generality, we may thus permute indices without changing the set of compatible weight vectors to also assume that $g'$ involves some $x_i$ with $i>q$. For this, we are using that the irreducibility of $g$ guarantees that at least one of $g'$ and $g''$ is not a $d$th power.

Consider the set $S$ of polynomials $f_c$ such that there is a compatible weight $\omega$ for which $f_c$ has $g$ as its initial term. We claim that $S$ is a subset of the family parameterized by $W$
Indeed, since $q>0$, $g''$ has weight at most equal to the weight of $x_1^d$. The monomials $x_0^d,x_0^{d-1}x_1,\ldots,x_0x_1^{d-1}$ all have weight at least as big as the weight of $x_1^d$, and are not scalar multiples of $g'$ or $g''$. Hence, none of these monomials can appear in any element of $S$, and the claim follows.

The proof of the theorem now follows from the argument given above.
\bibliographystyle{alpha}
    \bibliography{paper}

\newcommand{\etalchar}[1]{$^{#1}$}
\begin{thebibliography}{BLMM17}

\bibitem[And13]{anderson}
Dave Anderson.
\newblock Okounkov bodies and toric degenerations.
\newblock {\em Math. Ann.}, 356(3):1183--1202, 2013.

\bibitem[BLMM17]{flag}
Lara Bossinger, Sara Lamboglia, Kalina Mincheva, and Fatemeh Mohammadi.
\newblock Computing toric degenerations of flag varieties.
\newblock In {\em Combinatorial algebraic geometry}, volume~80 of {\em Fields
  Inst. Commun.}, pages 247--281. Fields Inst. Res. Math. Sci., Toronto, ON,
  2017.

\bibitem[CCG{\etalchar{+}}13]{fanosearch}
Tom Coates, Alessio Corti, Sergey Galkin, Vasily Golyshev, and Alexander
  Kasprzyk.
\newblock Mirror symmetry and {F}ano manifolds.
\newblock In {\em European {C}ongress of {M}athematics}, pages 285--300. Eur.
  Math. Soc., Z\"{u}rich, 2013.

\bibitem[HK15]{integrable}
Megumi Harada and Kiumars Kaveh.
\newblock Integrable systems, toric degenerations and {O}kounkov bodies.
\newblock {\em Invent. Math.}, 202(3):927--985, 2015.

\bibitem[Ito14]{seshadri}
Atsushi Ito.
\newblock Seshadri constants via toric degenerations.
\newblock {\em J. Reine Angew. Math.}, 695:151--174, 2014.

\bibitem[KM19]{km}
Kiumars Kaveh and Christopher Manon.
\newblock Khovanskii bases, higher rank valuations, and tropical geometry.
\newblock {\em SIAM J. Appl. Algebra Geom.}, 3(2):292--336, 2019.

\bibitem[KMM21]{kaveh2021generic}
Kiumars Kaveh, Christopher Manon, and Takuya Murata.
\newblock Generic tropical initial ideals of {C}ohen-{M}acaulay algebras.
\newblock arXiv:2009.04928, 2021.

\bibitem[KMM23]{kmm}
Kiumars Kaveh, Christopher Manon, and Takuya Murata.
\newblock On {D}egenerations of {P}rojective {V}arieties to {C}omplexity-{O}ne
  {$T$}-{V}arieties.
\newblock {\em Int. Math. Res. Not. IMRN}, (3):2665--2697, 2023.

\bibitem[MS15]{tropical}
Diane Maclagan and Bernd Sturmfels.
\newblock {\em Introduction to tropical geometry}, volume 161 of {\em Graduate
  Studies in Mathematics}.
\newblock American Mathematical Society, Providence, RI, 2015.

\end{thebibliography}
    \end{document}